\documentclass[10pt,amsfonts, epsfig]{amsart}
\usepackage{amsmath, amscd, amssymb}
\usepackage{graphpap, color}
\usepackage[mathscr]{eucal}
\usepackage{mathrsfs}
\usepackage{pstricks}
\usepackage{color}
\usepackage{cancel}
\usepackage[mathscr]{eucal}
\usepackage{stmaryrd}

\usepackage[pagebackref]{hyperref}

\usepackage[T1]{fontenc}
\usepackage{textcomp}

\usepackage{cite}
\usepackage[all,cmtip]{xy}

\numberwithin{equation}{section}

\newcommand{\CC}{\mathbb{C}}

\newcommand{\RR}{\mathbb{R}}
\newcommand{\ZZ}{\mathbb{Z}}

\def\cln{\colon}

\newcommand{\cal}{\mathcal}

\def\cK{{\cal K}}
\def\cL{{\cal L}}

\def\cX{{\cal X}}
\def\cY{{\cal Y}}

\def\begeq{\begin{equation}}
\def\endeq{\end{equation}}
\def\and{\quad{\rm and}\quad}

\def\and{\quad\text{and}\quad}

\DeclareMathOperator{\diff}{{Diff}}

\newtheorem{prop}{Proposition}[section]
\newtheorem{theo}[prop]{Theorem}
\newtheorem{lemm}[prop]{Lemma}

\newtheorem{defi}[prop]{Definition}
\newtheorem{conj}[prop]{Conjecture}

\newtheorem{defi-prop}[prop]{Definition-Proposition}
\newtheorem{quest}[prop]{Question}

\def\beq{\begin{equation}}
\def\eeq{\end{equation}}

\def\bee{\begin{equation}}
\def\eeq{\end{equation}}

\begin{document}
\title{Local isomorphisms for families of projective non-unruled manifolds}

\author{Mu-lin Li}
\address{School of Mathematics, Hunan University, China}
\email{mulin@hnu.edu.cn}

\thanks{The author is supported by NSFC (No. 12271073 and 12271412).}
\thanks{Keywords: locally isomorphic,   smooth family, coarse moduli space}
\thanks{MSC(2010): 14D15, 14E30, 14J10}

\date{\today}

\maketitle

\begin{abstract}
Let $\pi\cln \cX\to S$ and  $\pi\cln \cY\to S$ be two smooth families of projective non-uniruled manifolds over a Riemann surface $S$ (probably non-compact). Suppose these two families are pointwise isomorphic. We prove that  there exists an open dense subset $U\subset S$ such that the two restricted families are locally isomorphic over $U$.  This partially answers Wehler's question on locally isomorphic of families of compact complex manifolds.
\end{abstract}

\section{introduction}

Let $\pi:\cX\to S$ be a proper holomorphic submersion between complex manifolds $\cX$ and $S$. Therefore all the fibers of $\pi$ are compact complex manifolds of equal dimension. Such a proper submersion is called a smooth family of compact complex manifolds. Let $\pi:\cX\to S$ and $\pi^{\prime}:\cY\to S$ be two families of compact complex manifolds over a complex manifold $S$.  We say these
families are pointwise isomorphic if there exists a bi-holomorphism $f_s:\cY_s=\pi^{\prime-1}(s)\to \cX_s:=\pi^{-1}(s)$ for every point $s\in S$.

We say the two families $\pi$ and $\pi'$ are locally isomorphic at $s$ if there exists an open neighborhood $U_s\subset S$ and a bi-holomorphism $g_{U_s}:\cY_{U_s}\to \cX_{U_s}$ such that $s\in U_s$ and $\pi_{U_s}'=\pi_{U_s}\circ g_{U_s}$. Here $ \pi_{U_s}: \cX_{U_s}=\pi^{-1}({U_s})\to {U_s}$ is the restriction of $\pi$, and $ \pi'_{U_s}: \cY_{U_s}=\pi^{\prime-1}({U_s})\to {U_s}$ is the restriction of $\pi'$.

In 1977, Wehler posed the following question.
\begin{quest}Let $\pi:\cX\to S$ and $\pi':\cY\to S$ be two smooth families of compact complex manifolds over a complex manifold $S$. Suppose that these two families are pointwise isomorphic. If the function
\beq
s\to \dim H^0(\cY_s,T_{\cY_s})\nonumber
\eeq
is constant on $S$, are these two families locally isomorphic at every point of $S$?
\end{quest}
Wehler \cite{We} proved that it is true for families of complex tori and families  of compact manifolds with negative curved holomorphic curvature.  Meersseman \cite{Mee} attempted to prove it for arbitrary families of compact manifolds, but a counterexample constructed by Kirschner \cite{Kir} revealed gaps in his proof. This led to the proposal of the following weakened version of the question.
\begin{conj}\label{conj-1}Let $\pi:\cX\to S$ and $\pi':\cY\to S$ be two smooth families of compact complex manifolds over a complex manifold $S$. Suppose that these two families are pointwise isomorphic. If the function
\beq
s\to \dim H^0(\cY_s,T_{\cY_s})\nonumber
\eeq
is constant on $S$, then there exists an open dense subset $U\subset S$ such that the restricted families $\pi_{U}: \cX_{U}=\pi^{-1}({U})\to {U}$ and $\pi'_{U}: \cY_{U}=\pi^{\prime-1}({U})\to {U}$ are locally isomorphic at every point $s\in U$.
\end{conj}
In this paper, we focus on families of projective non-uniruled manifolds. By using the coarse moduli space of polarized K\"ahler manifolds constructed by Fujiki\cite{Fu84} and Schumacher \cite{Sch84}, we establish the following theorem.
\begin{theo}
Let $\pi:\cX\to S$ and $\pi':\cY\to S$ be two smooth families of projective non-uniruled manifolds over a Riemann surface $S$. Suppose the two families are pointwise isomorphic. Then there exists an open dense subset $U\subset S$ such that the restricted families $\pi_{U}: \cX_{U}=\pi^{-1}({U})\to {U}$ and $\pi'_{U}: \cY_{U}=\pi^{\prime-1}({U})\to {U}$ are locally isomorphic at every point $s\in U$.
\end{theo}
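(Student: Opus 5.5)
Our plan is to reduce the statement to the existence of a coarse moduli space of polarized manifolds. We then exploit two facts: non-uniruled manifolds have no nontrivial infinitesimal automorphisms that could obstruct gluing, and over a one-dimensional base, countable unions of proper analytic subsets are discrete.

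\textbf{Step 1: relative polarizations.} We first make both families polarized. Since every fiber is projective, the set of $s$ where a given class in $H^2(\cX_s,\ZZ)$ (transported by the Gauss--Manin connection) is of type $(1,1)$ and ample is analytic. A Baire category argument over the countable lattice $H^2(\cX_{s_0},\ZZ)$ shows that, after shrinking $S$ to a small disc around a general point, some integral class $\omega$ stays of type $(1,1)$ on all fibers. By openness of ampleness, $\omega$ is then ample on an open dense set. We do the same for $\cY$ with a class $\omega'$.

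The pointwise isomorphisms $f_s$ need not match $\omega'$ with $\omega$. We handle this with the countability of $H^2(\cX_s,\ZZ)$. Transport $f_s^*\omega$ to a fixed reference fiber of $\cY$ by parallel transport on a simply connected disc. For each integral class $\eta$ there, let $S_\eta$ be the set of $s$ where some isomorphism $\cY_s\to\cX_s$ pulls $\omega$ back to $\eta$. Then $S=\bigcup_\eta S_\eta$.

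\textbf{Step 2: the sets $S_\eta$ are analytic.} We claim each $S_\eta$ is a closed analytic subset, being the image of the relative Isom scheme of the polarized families, which is proper over the base. Properness uses that automorphism groups of polarized non-uniruled manifolds are finite-extension-of-torus and compact, together with the Fujiki--Schumacher separatedness of moduli of polarized non-uniruled Kähler manifolds. Granting the claim, Baire forces some $S_\eta$ to have interior, hence to equal the disc. Thus on a dense open set we may assume $f_s^*\omega=\omega'$ for all $s$, i.e.\ the families are pointwise isomorphic as \emph{polarized} families.

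\textbf{Step 3: constructing the local isomorphism.} Let $\mathcal M$ be the Fujiki--Schumacher coarse moduli space of polarized non-uniruled manifolds with the given Hilbert polynomial. The two families induce holomorphic classifying maps $\mu,\mu'\cln S\to\mathcal M$, and pointwise polarized isomorphism gives $\mu=\mu'$. The local structure of $\mathcal M$ near $[X]$ is $\mathrm{Def}(X,\omega)/\mathrm{Aut}(X,\omega)$, where $\mathrm{Def}$ is the Kuranishi space and the automorphism group is finite modulo its identity component. Non-uniruledness makes $h^0(T_{X})$ equal to the dimension of $\mathrm{Aut}^0$, an abelian variety. This guarantees that $h^0(T)$ is constant on a dense open set (upper semicontinuity) and that the Kuranishi family is universal there.

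Lift $\mu$ locally to $\mathrm{Def}$: away from a discrete set, the branched covering $\mathrm{Def}\to\mathcal M$ is unramified along the image curve, so local lifts $\tilde\mu,\tilde\mu'$ exist. They differ by an element of the finite group, which we compose away. Universality then yields isomorphisms $\cX_U\cong\tilde\mu^*\mathcal U\cong\cY_U$ over small $U$.

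\textbf{Main obstacle.} The hardest part is the analyticity and properness in Step 2, together with the lifting through the orbifold chart in Step 3 where $\mathrm{Def}$ may be singular and the Kuranishi family only semi-universal. Removing the discrete bad locus of $S$ (ramification points and jumps of $h^0(T)$) is exactly what forces the conclusion to hold only on an open dense $U$. It is also where the one-dimensionality of $S$ is used.
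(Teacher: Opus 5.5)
Your outline matches the paper's. You polarize $\cX$ by an integral class of type $(1,1)$ on every fiber, make the pointwise isomorphisms respect polarizations on an uncountable set, compare classifying maps into the separated Fujiki--Schumacher space $M$, and finish with the polarized Kuranishi family and its finite group $\Gamma$. The passage from discs to $S$ by a countable cover, which you leave implicit, is also how the paper globalizes.

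The real divergence is the matching step. The paper picks an uncountable set on which the transported maps $\kappa_t\circ f_t\circ\widetilde\psi_t$ lie in one component of $\mathrm{Diff}(\cY_{t_1},\cX_{t_1})$, so they act identically on $H^2$. You simply note that the transported class $f_s^*\omega_s$ lies in the countable group $H^2(\cY_{s_0},\ZZ)$. Your version is more elementary, since it needs nothing about $\pi_0$ of diffeomorphism groups. It also makes your separate construction of $\omega'$ superfluous: $\eta$ itself becomes the polarization of $\cY$, just as the paper builds $\beta$ from $f_t^*b_t$.

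Conversely, the paper avoids the analyticity of $S_\eta$ that you call the main obstacle. Take $S_\eta$ inside the ample locus of $\omega$ and suppose it is uncountable. Then $\eta$ is of type $(1,1)$ on all fibers, hence $c_1$ of a line bundle on $\cY$ that is ample on an open dense set. The two classifying maps into $M$ then agree on an uncountable set. Since $M$ is separated, the coincidence locus is analytic, so they agree identically; this is the paper's lemma on maps agreeing on an uncountable set. That is your Isom-properness, already packaged as separatedness of $M$. It also sidesteps the fact that $S_\eta$ is a priori closed only inside the ample locus, so Baire cannot be run on the whole disc as you wrote.

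Your Step 3 needs a different justification, although the conclusion survives. The claim that $\mathrm{Def}\to M$ is unramified along the image curve off a discrete set can fail. If every fiber has a polarized automorphism outside $\mathrm{Aut}^0$ acting nontrivially on $\mathrm{Def}$, the whole curve lies in the branch locus. Moreover, an arbitrary lift $\tilde\mu$ of $\mu$ would not by itself give $\cX_U\cong\tilde\mu^*\mathcal U$.

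The right input, used in the paper, is local completeness of Fujiki's polarized local modular family, which needs only that $\mathrm{Aut}^0$ is a torus. Constancy of $h^0(T)$ along $S$ is not the relevant criterion: it says nothing about points of $\mathrm{Def}$ off the curve. Completeness gives classifying maps $\tilde g,\tilde h\cln U_t\to K_t$ with $\cY_{U_t}\cong\tilde g^*\cK$, $\cX_{U_t}\cong\tilde h^*\cK$ and $\sigma\circ\tilde g=\sigma\circ\tilde h$. Now $U_t=\bigcup_{\gamma\in\Gamma}\{\tilde g=\gamma\circ\tilde h\}$ is a finite union of analytic subsets of a connected disc. So a single $\gamma$ works, and its lift to $\cK$ gives the isomorphism. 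With this, no ramification points need to be removed; the restriction to an open dense set comes only from the ample loci.
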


The main idea of the proof  is as following. For two smooth families of projective non-uniruled manifolds $\pi:\cX\to \Delta$ and $\pi':\cY\to\Delta$, we first construct two new families of polarized  projective non-uniruled manifolds $(\pi_U:\cX_U\to U,\alpha)$ and $(\pi'_U:\cY_U\to U, \beta)$. These two families induce two holomorphic maps $f,\, f': U\to M$ satisfying $f(U)=f'(U)$, where $M$ is the coarse moduli of polarized  projective non-uniruled manifolds.  Then by using the fact that the moduli functor of  polarized  projective non-uniruled manifolds is locally complete, we finish the proof.


\section{Preliminaries:Diffeomorphism groups and Moduli space of polarized K\"ahler manifolds}
\subsection{Diffeomorphism groups}
First, we recall some basic properties of the space of smooth maps. Let $X$ and $Y$ be two compact complex manifolds. Denote by $C^{\infty}(Y,X)$ the space of smooth maps between $Y$ and $X$, endowed with the compact open $C^{\infty}$ topology. Let $\diff(Y,X)$ be the space of diffeomorphisms between $Y$ and $X$ (assuming that they are diffeomorphic to each other). We have the following properties.
\begin{prop}\cite[2.6 Lemma]{Schm}$\diff(Y,X)$ is an open subset of $C^{\infty}(Y,X)$ in compact open $C^{\infty}$ topology.
\end{prop}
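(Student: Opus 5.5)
The statement is the cited fact that $\diff(Y,X)$ is open in $C^{\infty}(Y,X)$ for the compact-open $C^\infty$ topology. I would prove it in three steps: first show local diffeomorphisms form an open set, then upgrade to injectivity, then to surjectivity.

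\emph{Step 1: local diffeomorphisms.} Fix $f_0\in\diff(Y,X)$. Since $Y$ is compact, the compact-open $C^\infty$ topology coincides with the Whitney $C^\infty$ topology, and in particular controls $C^1$ data uniformly. Cover $Y$ by finitely many relatively compact coordinate charts $V_i\Subset W_i$ with $f_0(\overline{W_i})$ contained in a chart of $X$. In these charts $\det df_0$ is bounded away from zero on $\overline{V_i}$. Any $f$ that is $C^1$-close to $f_0$ on $\bigcup_i\overline{V_i}$ maps $\overline{V_i}$ into the same target chart and still has $\det df\neq0$ there. So $f$ is a local diffeomorphism, and these conditions define a neighborhood of $f_0$.

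\emph{Step 2: injectivity.} This is the main obstacle, since it is a global property. The plan is a uniform quantitative inverse function theorem. Fix Riemannian metrics on $Y$ and $X$. By compactness there are $\delta>0$ and $c>0$ such that for every $y$, $f_0$ restricted to the ball $B(y,\delta)$ is injective and satisfies $d(f_0(y_1),f_0(y_2))\ge c\,d(y_1,y_2)$ there. This uniform bound persists under small $C^1$ perturbation, giving:
\begin{itemize}
\item local injectivity: $f$ is injective on each ball $B(y,\delta)$ with constant $c/2$, by a chart-wise mean value estimate;
\item global separation: $f_0$ is injective and $Y$ is compact, so $\eta:=\min\{d(f_0(y_1),f_0(y_2)) : d(y_1,y_2)\ge\delta\}>0$. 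If $\sup d(f,f_0)<\eta/2$, then $f(y_1)\neq f(y_2)$ whenever $d(y_1,y_2)\ge\delta$.
\end{itemize}
Together these show $f$ is injective.

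\emph{Step 3: surjectivity.} An injective local diffeomorphism is an open map, so $f(Y)$ is open in $X$. It is also compact, hence closed. If $X$ is connected, $f(Y)=X$. If $X$ has several components, note that $f$ is $C^0$-close to $f_0$, so it maps each component of $Y$ into the same component of $X$ as $f_0$ does. Applying the open-and-closed argument in each component gives surjectivity.

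Hence $f$ is a bijective local diffeomorphism, so a diffeomorphism. The neighborhood of $f_0$ from the three steps therefore lies in $\diff(Y,X)$, which proves openness.
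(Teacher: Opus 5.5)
Your proof is correct. The paper gives no argument for this statement; it only quotes \cite[2.6 Lemma]{Schm}. So there is no internal proof to compare against. What you wrote is the standard self-contained argument, with the same scheme as the textbook proofs that immersions, embeddings and diffeomorphisms are open in the strong topology (e.g.\ Hirsch's \emph{Differential Topology}, Ch.~2). Its advantage is that it makes the citation unnecessary.

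A few points should be made explicit in a write-up.
\begin{enumerate}
\item \emph{Choice of $\delta$ in Step 2.} Choose the $V_i$ with convex coordinate images, and take $2\delta$ smaller than a Lebesgue number of the cover $\{V_i\}$. Then each ball $B(y,\delta)$ lies in a single $V_i$. By Step 1, its images under $f_0$ and under every nearby $f$ lie in one target chart.
\item \emph{The mean value estimate.} In those charts,
\begin{equation*}
|f(y_1)-f(y_2)|\ge \bigl(c'-\sup|D(f-f_0)|\bigr)\,|y_1-y_2|,
\end{equation*}
where $c'>0$ is a coordinate lower Lipschitz constant for $f_0$ on $\overline{V_i}$. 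This constant exists by invertibility of $df_0$ near the diagonal and by injectivity plus compactness away from it. The estimate gives the uniform local injectivity. Your ``$c/2$'' holds only up to the constants comparing Riemannian and coordinate distances, which is harmless.
\item \emph{Openness of the uniform condition.} The condition $\sup_y d(f(y),f_0(y))<\eta/2$ is open. On a compact source the compact-open $C^0$ topology is the topology of uniform convergence, and the $C^\infty$ topology is finer.
\item \emph{The component condition in Step 3.} The condition you use is the subbasic open set $\{f : f(Y_j)\subset X_{\sigma(j)}\}$, where $\sigma$ is the bijection of components induced by $f_0$. It is vacuous when $X$ is connected.
\end{enumerate}
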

\begin{prop}\cite[Proposition 7]{Lesl}\label{homotopy-1}Let $X,Y,Z$ be compact complex manifolds, then the composition
\beq
C^{\infty}(Z,Y)\times C^{\infty}(Y,X)\to C^{\infty}(Z,X)
\eeq
is smooth in compact open $C^{\infty}$ topology.
\end{prop}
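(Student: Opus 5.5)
The plan is to deduce smoothness of composition from the structure of $C^{\infty}(Y,X)$ as a Fréchet manifold, using the exponential law of convenient calculus (Kriegl--Michor). This reduces everything to the obvious fact that a composition of smooth finite-dimensional maps is smooth. I will work with the standard manifold structure.

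\textbf{Step 1: the manifold structure.} Fix a Riemannian metric on $X$ with exponential map $\exp$. For $f\in C^{\infty}(Y,X)$, a chart around $f$ is
\[
u\mapsto \bigl(y\mapsto \exp_{f(y)}u(y)\bigr),
\]
defined on a $C^0$-small neighbourhood of $0$ in the Fréchet space $\Gamma(Y,f^{*}TX)$. Since $Y$ is compact, the topology induced by these charts is the compact open $C^{\infty}$ topology. The chart changes are smooth: they are given by fibrewise application of a smooth finite-dimensional map, and the $\Omega$-lemma applies.

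\textbf{Step 2: smooth curves.} A curve $c\colon\RR\to C^{\infty}(Y,X)$ is smooth if and only if its adjoint
\[
c^{\wedge}\colon \RR\times Y\to X,\qquad (t,y)\mapsto c(t)(y),
\]
is smooth. In a chart this is the statement that a curve in $\Gamma(Y,f^{*}TX)$ is smooth exactly when the corresponding section of the pulled-back bundle over $\RR\times Y$ is smooth. Proving this means checking that difference quotients converge uniformly with all $Y$-derivatives on the compact set $Y$. It is routine but is the technical heart of the argument.

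\textbf{Step 3: composition preserves smooth curves.} Take smooth curves $(c_1,c_2)$ in $C^{\infty}(Z,Y)\times C^{\infty}(Y,X)$. The adjoint of $t\mapsto c_2(t)\circ c_1(t)$ is
\[
(t,z)\mapsto c_2^{\wedge}\bigl(t,c_1^{\wedge}(t,z)\bigr).
\]
This is a composition of smooth maps between finite-dimensional manifolds, hence smooth. By Step 2 the composed curve is therefore smooth.

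\textbf{Step 4: conclusion.} A map between Fréchet manifolds that sends smooth curves to smooth curves is conveniently smooth. For Fréchet spaces, conveniently smooth maps are continuous and are smooth in the Michal--Bastiani sense. Hence composition is smooth.

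I expect the main obstacle to be Step 2, the exponential law, together with the compatibility of charts needed to make it chart-independent. If one prefers to avoid convenient calculus, one can instead verify directly in exponential charts that the local representative
\[
(v,u)\mapsto \exp^{-1}_{(f\circ g)}\bigl(\exp_f u\circ \exp_g v\bigr)
\]
is smooth. This is a combination of the $\Omega$-lemma (left composition by a smooth fibre map) and the smoothness of right composition, which is continuous and linear in $u$. The only delicate point there is the loss of one derivative in the joint variable, and that loss is harmless in the $C^{\infty}$ Fréchet setting.
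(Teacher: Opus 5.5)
Your proof is correct, but it takes a different route from the proof behind the paper's citation. The paper gives no argument of its own and simply refers to Leslie. Leslie's 1967 proof predates convenient calculus and works by hand with local representatives in exponential-map charts. It is essentially the fallback you sketch in your last paragraph:
\begin{itemize}
\item one writes the local representative $(v,u)\mapsto \exp^{-1}_{f\circ g}(\exp_f u\circ \exp_g v)$;
\item one identifies its derivative, which is the tangent map of composition, $(\delta g,\delta f)\mapsto \delta f\circ g+Tf\circ\delta g$;
\item one checks joint continuity of all iterated derivatives. The term $Tf\circ\delta g$ is where one derivative is lost, and that loss is absorbed because all $C^k$-seminorms are used at once.
\end{itemize}

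Your main route instead puts all the infinite-dimensional analysis into two general facts. The first is the exponential law of Step 2. The second is the theorem that on Fréchet spaces the maps preserving smooth curves are exactly the Michal--Bastiani smooth maps. After that, smoothness of composition is just the finite-dimensional chain rule. The same mechanism also gives the $\Omega$-lemma and smoothness of evaluation with no estimates. The price is that the equivalence theorem is a substantial black box, and the derivative-loss analysis is hidden inside it. The chart computation, by contrast, is self-contained and yields an explicit tangent map.

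If you carry out the fallback, note one point. The $\Omega$-lemma, together with smoothness of $u\mapsto u\circ h$ for fixed $h$, gives only separate smoothness. Joint smoothness in $(u,h)$ needs the derivative formula in the $h$-direction and an induction on the order.

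Finally, the paper uses this proposition only to get that $g\mapsto\phi\circ g$ is a homeomorphism $\diff(Y)\to\diff(Y,X)$ for a fixed diffeomorphism $\phi$. For that, continuity of composition would already suffice.
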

\begin{prop}The space $\diff(Y,X)$ has only countable many connected components.
\end{prop}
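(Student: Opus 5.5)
We need to show that $\diff(Y,X)$ has only countably many connected components. The plan is to show that $C^{\infty}(Y,X)$, with the compact open $C^{\infty}$ topology, is a separable metrizable space that is locally path connected. The open subset $\diff(Y,X)$ then inherits both properties, and the conclusion follows from a general topological fact: in a locally connected space the connected components are open, and a separable space cannot contain uncountably many pairwise disjoint nonempty open sets. (Each such set would have to contain a point of a fixed countable dense subset.)

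\textbf{Separability and metrizability.} Since $Y$ is compact, the compact open $C^{\infty}$ topology coincides with the Whitney $C^{\infty}$ topology. Embed $X$ into some $\RR^N$ by Whitney's embedding theorem. Then $C^{\infty}(Y,X)$ becomes a subspace of $C^{\infty}(Y,\RR^N)$. The latter is a Fréchet space whose topology is given by the seminorms $\|\cdot\|_{C^k}$, computed with respect to a finite atlas of $Y$. It is separable: for example, restrictions of polynomial maps with rational coefficients, composed with an embedding of $Y$ into some $\RR^{N'}$, form a countable dense set. This uses the Stone--Weierstrass theorem with derivatives (Nachbin's theorem), or alternatively a partition of unity argument with countable dense subsets of $C^k$ on each chart. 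A subspace of a separable metrizable space is separable and metrizable, so $C^{\infty}(Y,X)$ and its open subset $\diff(Y,X)$ are separable.

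\textbf{Local path connectedness.} This is the step requiring the most care. Take a tubular neighborhood $W$ of $X$ in $\RR^N$ with a smooth retraction $r\cln W\to X$. For $f\in C^{\infty}(Y,X)$, consider maps $g$ that are $C^0$-close to $f$, meaning every segment $[f(y),g(y)]$ lies in $W$. Such $g$ can be joined to $f$ by the path
\[
t\mapsto r\circ((1-t)f+tg).
\]
This path is continuous into $C^{\infty}(Y,X)$ by the chain rule estimates, which is the same kind of smoothness as in Proposition~\ref{homotopy-1}. A basic neighborhood $\{g : \|g-f\|_{C^k}<\epsilon\}$, with $\epsilon$ small, is path connected by these paths. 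This requires checking that the whole path stays inside a slightly larger prescribed neighborhood. Intersecting with the open set $\diff(Y,X)$, shrink $\epsilon$ so that the ball lies inside $\diff(Y,X)$; this is possible by openness, i.e.\ the first proposition above. Then every point of $\diff(Y,X)$ has arbitrarily small path connected neighborhoods.

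\textbf{Conclusion.} The connected components of $\diff(Y,X)$ are therefore open and pairwise disjoint. Each one meets the countable dense subset, so there are at most countably many of them.
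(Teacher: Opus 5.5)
Your proof is correct, but it takes a different route from the paper. The paper works through the diffeomorphism group. It cites \cite{ABK} for the facts that $\diff(Y)$ is locally arc-connected and has the homotopy type of a countable CW complex, so $\diff(Y)$ has countably many path components and these are its components. It then transfers this to $\diff(Y,X)$ by the homeomorphism $g\mapsto \phi_0\circ g$ for a fixed $\phi_0\in\diff(Y,X)$, with inverse $h\mapsto \phi_0^{-1}\circ h$; continuity of both maps comes from Proposition~\ref{homotopy-1}. You instead verify the needed point-set properties of $\diff(Y,X)$ directly: separability inherited from the Fr\'echet space $C^{\infty}(Y,\RR^N)$, and openness of path components via the tubular retraction $r$. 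Your route is elementary and self-contained. It needs no reference diffeomorphism or group structure, and it applies verbatim to any open subset of $C^{\infty}(Y,X)$. The paper's route is shorter, but it leans on a much stronger cited result, the countable CW homotopy type. That result is typically derived from $\diff(Y)$ being a manifold modeled on a separable Fr\'echet space, which is essentially the separability you check by hand. One phrasing point: as you note, a $C^k$-ball around $f$ need not itself be path connected by your paths $t\mapsto r\circ((1-t)f+tg)$, but you do not need that. It suffices that for every neighborhood $V$ of $f$ there is a smaller $V'$ whose points are joined to $f$ inside $V$. This follows from $r\circ f=f$ and continuity of $h\mapsto r\circ h$, and it already makes the path components of $\diff(Y,X)$ open, hence equal to the components.
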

\begin{proof}Let $\diff(Y):=\diff(Y,Y)$ be the diffeomorphism group of $Y$. It has only countable many connected components because it is locally arc-connected and  has the homotopy
type of a countable CW complex by \cite[Section 1.1.2]{ABK}. By Proposition \ref{homotopy-1}, the space $\diff(Y,X)$ is isomorphic to $\diff(Y)$. Thus $\diff(Y,X)$ has only countable many connected components.
\end{proof}
Then we have the following equality on induced morphisms on cohomologies groups.
\begin{prop}\label{equal-1}Let $f,g:Y\to X$ be two diffeomorphisms in the same connected component of $\diff(Y,X)$. Then the induced morphisms on cohomology groups are equal
\beq
f^*=g^*: H^2(X,R)\to H^2(Y, R),\nonumber
\eeq
where $R=\ZZ,\RR$ or $\CC$.
\end{prop}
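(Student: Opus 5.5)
The plan is to show that two diffeomorphisms in the same connected component are smoothly homotopic, and then use homotopy invariance of cohomology. First I will check that $\diff(Y,X)$ is locally path-connected. Granting this, its connected components coincide with its path components, so $f$ and $g$ are joined by a continuous path $\gamma\colon[0,1]\to\diff(Y,X)$ with $\gamma(0)=f$ and $\gamma(1)=g$.

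For local path-connectedness, I would use that $C^{\infty}(Y,X)$ is a Fréchet manifold modelled locally on spaces of sections $\Gamma(Y,\phi^*TX)$ via an exponential map for some Riemannian metric on $X$. These model spaces are locally convex, hence locally path-connected. Since $\diff(Y,X)$ is open in $C^{\infty}(Y,X)$ (the first Proposition above, \cite[2.6 Lemma]{Schm}), it inherits local path-connectedness. Alternatively, one can transport the local arc-connectedness of $\diff(Y)$, quoted from \cite{ABK} in the previous proof, through the identification $\diff(Y,X)\cong\diff(Y)$, $h\mapsto f^{-1}\circ h$, which is a homeomorphism by Proposition \ref{homotopy-1}.

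Next, I turn the path into a homotopy. The evaluation map $Y\times[0,1]\to X$, $(y,t)\mapsto\gamma(t)(y)$, is continuous, because the compact-open $C^\infty$ topology is finer than the compact-open topology and $Y$ is compact (hence locally compact). So $f$ and $g$ are homotopic as continuous maps $Y\to X$.

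Homotopy invariance of singular cohomology then gives $f^*=g^*\colon H^2(X,R)\to H^2(Y,R)$ for $R=\ZZ$ directly. For $R=\RR,\CC$ it follows either from the universal coefficient theorem or directly from de Rham theory. In the de Rham version, one first smooths the homotopy (continuous homotopies between smooth maps can be replaced by smooth ones) and then applies the standard chain homotopy formula on closed $2$-forms.

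The only genuinely nontrivial point is the first step: that connected components of $\diff(Y,X)$ are path components. That is where the openness in $C^\infty(Y,X)$ and the Fréchet manifold structure are needed. I expect everything after that to be formal.
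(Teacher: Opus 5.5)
The paper states this proposition without proof and treats it as standard, so there is no argument in the paper to compare against. Your proposal is correct and supplies the natural justification.

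\begin{itemize}
\item Connected components of $\diff(Y,X)$ are path components, because $\diff(Y,X)$ is open in the Fr\'echet manifold $C^{\infty}(Y,X)$, whose model spaces are locally convex. Your alternative also works: use the homeomorphism $\diff(Y,X)\cong\diff(Y)$ together with the local arc-connectedness from \cite{ABK}. That is exactly what the paper relies on in its countability proof, so your argument fits the paper's framework.
\item A continuous path in the compact-open $C^{\infty}$ topology gives a continuous homotopy $Y\times[0,1]\to X$, since evaluation is continuous on the compact source $Y$.
\item Homotopy invariance of cohomology then gives $f^*=g^*$.
\end{itemize}

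Two minor remarks. First, singular cohomology is homotopy invariant for every coefficient group, so the universal coefficient theorem is not needed for $R=\RR,\CC$. Second, you can bypass the path-connectedness step altogether. If $h$ is $C^0$-close to $f$ (within the injectivity radius of a Riemannian metric on $X$), then $h$ is homotopic to $f$ via $(y,s)\mapsto\exp_{f(y)}\bigl(s\,\exp_{f(y)}^{-1}h(y)\bigr)$. Hence the map $h\mapsto h^*$ is locally constant on $C^{\infty}(Y,X)$, and therefore constant on each connected component.
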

\subsection{Moduli space of polarized K\"ahler manifolds}

\begin{defi}A polarized compact K\"ahler manifold is a pair $(X, \omega_{X})$ consisting of a connected compact complex manifold $X$ and a K\"ahler class $\omega_{X}\in H^2(X,\RR)$.
\end{defi}

An isomorphism of two polarized compact K\"ahler manifolds $(X, \omega_{X})$ and $(\widetilde{X},\omega_{\widetilde{X}})$ is an holomorphic isomorphism $\phi:X\to \widetilde{X}$ with $\phi^*(\omega_{\widetilde{X}})=\omega_{X}\in H^2(X,\RR)$.

\begin{defi}A polarized family of compact K\"ahler manifolds (parametrized by a complex space $B$) is a pair $(f,\omega_{\cX/B})$ consisting of a proper smooth morphism $f:\cX\to B$ of complex spaces with connected fibers and an element
 $\omega_{\cX/B}\in \Gamma(B,R^2f_*\RR)$ such that
 \begin{enumerate}
 \item The value of the section $\omega_{\cX_b}=(\omega_{\cX/B})(b)$ on $b$ is a K\"ahler class on each fiber $\cX_b =f^{-1}(b)$.
  \item $\eta(\omega_{\cX/B})=0$ where $\eta:\Gamma(B,R^2f_*\RR)\to\Gamma(B,R^2f_*\mathcal{O}_{\cX})$ is the homomorphism induced by the natural inclusion $\RR\to \mathcal{O}_{\cX}$ of sheaves on $\mathcal{X}$.
 \end{enumerate}
\end{defi}

%
%
Let $\mathcal{M}$ be the groupoid fibered category over the category of reduced complex spaces, such that for a reduced complex space $B$ the groupoid $\mathcal{M}(B)$ is as follows
 \begin{align*}
 \mathcal{M}(B)=&\bigg\{(f: \cX\to B,\omega_{\cX/B})\, \mbox{is a family of polarized K\"ahler manifolds };\\
&(\cX_b,\omega_{\cX_b})\, \mbox{is a polarized non-uniruled K\"ahler manifold,\,  $\forall b\in B$}\bigg\}.
\end{align*}

 An arrow of $\mathcal{M}(B)$ from $(f: \cX\to B,\omega_{\cX/B})$ to $(f': \cX'\to B,\omega_{\cX'/B})$ is an isomorphism $\Phi:  \cX\to \cX'$ such that $\Phi^*(\omega_{\cX'/B})=\omega_{\cX/B}$.

\begin{defi} Let $\left(f:\cX\to S,\omega_{\cX}\right)$ be a polarized family of compact K\"ahler manifolds. Let $s\in S$. Then we say that $\left(f:\cX\to S,\omega_{\cX}\right)$ is locally complete at $s$ if for
any polarized deformation $\left(f':\cX'\to S',\omega_{\cX'}\right)$ of $(\cX_s,\omega_{\cX_s})$, there exists a morphism $\tau: S'\to S$, such that $\left(f':\cX'\to S',\omega_{\cX'}\right)$ is isomorphic
to the pull-back of $\left(f:\cX\to S,\omega_{\cX}\right)$ to $S'$. If, further, $\tau$ is unique for any $\left(f':\cX'\to S',\omega_{\cX'}\right)$ as above, we say that $\left(f:\cX\to S,\omega_{\cX}\right)$, considered as a polarized deformation of $(\cX_s,\omega_{\cX_s})$, is the local modular family of $(\cX_s,\omega_{\cX_s})$. In this case $\tau$ is called the universal map associated to $\left(f':\cX'\to S',\omega_{\cX'}\right)$.
\end{defi}
Such a local modular family $\left(f:\cX\to S,\omega_{\cX}\right)$ is  usually called the Kuranish family of $(\cX_s,\omega_{X_s})$. The following Proposition asserts the existence of the Kuranishi family for polarized non-uniruled manifolds.

\begin{prop}\cite[Proposition 8]{Fu84}\label{moduli-1} For any polarized compact K\"ahler manifold $(X,\omega_X)$ such that $Aut_0(X)$ is a complex torus its local modular family $\left(f:\cX\to S,\omega_{\cX}\right)$ with $(\cX_0,\omega_{\cX_0})=(X,\omega_X)$ exists. Moreover it is locally complete at every point in a neighborhood of $0$.
\end{prop}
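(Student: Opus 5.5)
The plan is to build the polarized local modular family as a closed analytic subfamily of the Kuranishi family of $X$. Uniqueness of the classifying map will then come from the torus assumption on $Aut_0(X)$ via a Wavrik-type universality criterion.

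\emph{Step 1: the Kuranishi family.} Let $\left(g\cln\cK\to T\right)$ be the Kuranishi family of $X$, with $\cK_0=X$. By Kuranishi's theorem it is complete at every point of a neighborhood of $0$. Shrinking $T$ to a contractible Stein neighborhood, $R^2g_*\RR$ is a constant sheaf. Hence $\omega_X$ extends uniquely to a flat section $\tilde\omega\in\Gamma(T,R^2g_*\RR)$.

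\emph{Step 2: cut out the polarized locus.} Consider the holomorphic section $\eta(\tilde\omega)$ of $R^2g_*\mathcal{O}_{\cK}$. This is the $(0,2)$-part of $\tilde\omega$ in the Hodge filtration, and it is coherent after shrinking, since $h^{0,2}$ is constant for Kähler fibers. Let $S\subset T$ be its zero locus, a closed analytic subspace containing $0$. Put $\cX=\cK|_S$ and $\omega_{\cX}=\tilde\omega|_S$. Then condition (2) of the definition of a polarized family holds by construction. Condition (1) holds after shrinking $S$: by Kodaira--Spencer stability of Kähler metrics one can deform a Kähler form representing $\omega_X$ to closed forms representing $\omega_{\cX_s}$ that remain positive for $s$ near $0$.

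\emph{Step 3: local completeness.} Let $(f'\cln\cX'\to S',\omega_{\cX'})$ be a polarized deformation of $(\cX_s,\omega_{\cX_s})$ for $s$ near $0$. Completeness of the Kuranishi family at $s$ gives a map $\tau\cln S'\to T$ with $\cX'\cong\tau^*\cK$. Under this isomorphism $\omega_{\cX'}$ corresponds to the flat section $\tau^*\tilde\omega$, because both are flat and agree at the base point. The condition $\eta(\omega_{\cX'})=0$ then says $\tau^*\eta(\tilde\omega)=0$, so $\tau$ factors through $S$. Hence the polarized family is locally complete at every point near $0$.

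\emph{Step 4: uniqueness of $\tau$, the main obstacle.} Non-uniqueness of Kuranishi maps comes from holomorphic vector fields, that is, from $H^0(X,T_X)$. By Wavrik's criterion it suffices to show that $s\mapsto\dim H^0(\cX_s,T_{\cX_s})$ is constant on $S$ near $0$, and that the polarization imposes no further identifications.

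For the first point I would use the hypothesis that $Aut_0(X)$ is a complex torus. On a compact Kähler manifold with $Aut_0$ a torus, every holomorphic vector field is parallel and is pulled back from the Albanese torus, by Lieberman's structure theory. The dimension of this space is governed by topological and Hodge data: it equals the dimension of the subtorus of $Alb$ acting. Upper semicontinuity gives $h^0(T_{\cX_s})\le h^0(T_X)$. I would then argue that the action of $Aut_0(X)$ deforms along with the family, in the spirit of Fujiki's results on relative automorphism spaces. This gives the reverse inequality, so the dimension is constant.

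For the second point, $Aut_0(\cX_s)$ acts trivially on $H^2$, so it preserves $\omega_{\cX_s}$. Thus polarized isomorphisms near the identity coincide with all isomorphisms near the identity, and Wavrik's argument passes to the reduced subspace $S$.

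I expect this step, namely constancy of $h^0(T)$ and the deformation of the torus action, to be the main difficulty. There I would rely on Fujiki's theory of the relative automorphism space $Aut_S(\cX)$, namely its properness over $S$ in the Kähler case, to conclude.
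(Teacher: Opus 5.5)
The paper gives no proof of this statement; it quotes Fujiki [Fu84, Proposition 8]. So your argument has to stand on its own. Steps 1--3 are fine: cutting the Kuranishi base $T$ down to the zero locus $S$ of $\eta(\tilde\omega)$, and using Kuranishi's completeness at nearby points, does give the polarized Kuranishi family and its local completeness near $0$.

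\textbf{The gap is in Step 4.} The first assertion of the proposition, that this is the \emph{local modular} family, means uniqueness of $\tau$, and Step 4 does not establish it. Semicontinuity only gives $h^0(\cX_s,T_{\cX_s})\le h^0(X,T_X)$; the reverse inequality is merely announced. The claim that this dimension is ``governed by topological and Hodge data'' is unsupported.

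The tool you propose for the reverse inequality, properness of the relative automorphism space over the base, points the wrong way. Properness says that limits of automorphisms of nearby fibers are automorphisms of $X$, which again bounds $\dim Aut(\cX_s)$ from above. A proper group space can have the torus $Aut_0(X)$ as fiber over $0$ and trivial fibers elsewhere. So properness cannot show that $Aut_0(X)$ survives on nearby fibers.

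\textbf{The missing idea} is to use the compactness of $A:=Aut_0(X)$ inside Kuranishi's construction.
Average to get an $A$-invariant Hermitian metric on $X$. Then $\bar\partial^*$, the Green operator, the harmonic projection and the bracket are all $A$-equivariant. Hence so are the Kuranishi map $\phi(t)=t+\frac12\bar\partial^*G[\phi(t),\phi(t)]$ on harmonic $T_X$-valued $(0,1)$-forms and the equations cutting out $T$.

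The induced representation of $A$ on $H^1(X,T_X)$ is holomorphic, since its differential $v\mapsto[v,\cdot\,]$ is $\CC$-linear. It is therefore trivial, because $A$ is a compact connected complex Lie group. So every $a\in A$ satisfies $a^*\phi(t)=\phi(t)$. That is, $a$ acts biholomorphically on every fiber $\cK_t$, and in fact on $\cK$ over $T$.

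This gives $h^0(\cK_t,T_{\cK_t})\ge\dim A=h^0(X,T_X)$. So $h^0$ is constant on all of $T$, not just on $S$, and constancy on $T$ is what Wavrik's theorem actually requires. It also shows that the automorphisms in $A$ extend over any base. Wavrik's theorem then makes the unpolarized Kuranishi family universal. Uniqueness of $\tau$ for the polarized family over $S\subset T$ follows at once by composing with $S\hookrightarrow T$. No separate argument about polarization-preserving automorphisms is needed.
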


By \cite[Section 1, Theorem]{Fu84}, \cite[(1.3)Theorem]{Sch84} and \cite[Theorem 5.9]{FS90}, there exists a coarse separated moduli complex space $M$ for $\mathcal{M}$.

\section{Proof of the main theorem}
When $\pi\cln \cX\to \Delta$ is a smooth family of compact complex manifolds, by \cite[Theorem 9.3]{Vo} $\pi:\cX\to\Delta$ forms a smooth fiber bundle. There exists a diffeomorphism
\beq
T:\cX\cong \cX_{t_1}\times \Delta
\eeq
such that $pr_2\circ T=\pi$, where $pr_2:\cX_{t_1}\times \Delta\to \Delta$, and $T|_{\pi^{-1}(t_1)}=Id:\cX_{t_1}\to \cX_{t_1}$.

   The explanation of the existence of $T$ is from \cite{Math-1}: For constant map $F_0:\Delta\to \Delta$ which maps the disk to $t_1$, Let $F:\Delta\times [0,1]\to \Delta$ be a smooth homotopy from  $F_0:\Delta\to \Delta$ to identity map $F_1:\Delta\to \Delta$. The pullback $F^*\cX$ is a smooth fiber bundle over $\Delta\times [0,1]$. By \cite[Theorem 5]{del}, this fiber bundle admits a complete Ehresmann connection.  Let  $v$ be the trivial vector field of $[0,1]$, it induces a vector field $\widetilde{v}$ on $\Delta\times [0,1]$. Denote by $\mathcal{V}$ the horizontal vector field over $F^*\cX$ induced by pullback of $\widetilde{v}$. Then the flow induced by $\mathcal{V}$ yields the isomorphism between $F_0^*\cX$ to $F^*_1\cX$, which gives the isomorphism $T^{-1}: \cX_{t_1}\times \Delta \to \cX$.

   Let  $T_{t_1}:=pr_1\circ T: \cX\to \cX_{t_1}$ and $\kappa_t: \cX_t\to \cX_{t_1}$ be the composition of $\imath_t: \cX_t\to \cX$ with $T_{t_1}$. The diffeomorphisms $\{\kappa_t\}$
 depend smoothly on $t$. Let $\psi_t:\cX_{t_1}\to \cX_t$ be the inverse diffeomorphism of $\kappa_t$. See the following commutative diagram.
\begin{eqnarray*}
\xymatrix{
\cX_t \ar[r]^{\iota_t} \ar@<.5ex>[rd]^{\kappa_t}  &  \cX\ar[d]^{T_{t_1}}\ar[r]^(0.35){T}\ar[rd]^(0.35){\pi}|\hole  & \cX_{t_1}\times\Delta \ar[dl]^(0.6){p_1}\ar[d]^{p_2}\\
& \cX_{t_1}\ar@<.5ex>[lu]^{\psi_t}  & \Delta
}
\end{eqnarray*}

Because $\pi:\mathcal{X}\to \Delta$ is a smooth family of compact complex manifolds, we have that
\beq\label{iden-1}
\psi_t^*H^{k}(\cX_t,\ZZ)=H^{k}(\cX_{t_1},\ZZ),\, \psi_t^*H^{k}(\cX_t,\CC)=H^{k}(\cX_{t_1},\CC).
\eeq
Denote by $H^{p,q}(\cX_t,\CC)$ the Dolbeault cohomology groups of the projective manifold $\cX_t$. We have the following Hodge decomposition
\beq
H^2(\cX_t,\CC)=H^{2,0}(\cX_t,\CC)\oplus H^{1,1}(\cX_t,\CC)\oplus H^{0,2}(\cX_t,\CC).\nonumber
\eeq
\begin{lemm}\label{Key-lemm-1}Let $\pi\cln \cX\to \Delta$ be a smooth family of projective manifolds. For an element $\alpha\in H^2(\cX,\ZZ)$, suppose that there exists an uncountable  subset $E\subset \Delta$ such that the restriction $a_t:=\iota_t^*\alpha\in H^{1,1}(\cX_t,\CC)$ for $t\in E$. Then there exists a line bundle $\cL$ over $\cX$ such that $c_1(\cL)=\alpha$.
\end{lemm}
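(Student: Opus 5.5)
The plan is to exploit the fact that the condition ``$a_t$ is of type $(1,1)$'' is analytic in $t$. An uncountable subset of the disc has an accumulation point in $\Delta$, so by the identity theorem the condition will hold for every $t\in\Delta$. Once $\alpha$ restricts to a $(1,1)$-class on every fiber, it lies in the kernel of $H^2(\cX,\ZZ)\to H^2(\cX,\mathcal{O}_{\cX})$, and the exponential sequence on $\cX$ produces $\cL$.

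\emph{Step 1: analyticity.} Via the trivialization $T$ and \eqref{iden-1}, identify $\psi_t^*a_t$ with the fixed class $a:=\iota_{t_1}^*\alpha\in H^2(\cX_{t_1},\ZZ)$. Indeed, $\iota_t^*\alpha$ corresponds to $\iota_{t_1}^*\alpha$ under $\psi_t^*$, because $T$ is homotopic to the identity along the contractible base. The condition $a_t\in H^{1,1}(\cX_t)$ is equivalent to the vanishing of the $(0,2)$-component of $a_t$. Since $a_t$ is real, this is in turn equivalent to the vanishing of the $(2,0)$-component, that is,
\[
\int_{\cX_t} a_t\wedge \sigma_t\wedge \omega^{n-2}=0
\quad\text{for all } \sigma_t\in H^{0}(\cX_t,\Omega^2_{\cX_t}),
\]
where $\omega$ is a fixed polarization class. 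After shrinking $\Delta$ if necessary, I will use a local frame $\sigma_1(t),\dots,\sigma_m(t)$ of the holomorphic Hodge subbundle $F^2H^2=R^0\pi_*\Omega^2_{\cX/\Delta}$, whose rank is constant by Hodge theory for Kähler families. Transported to $H^2(\cX_{t_1},\CC)$ by $\psi_t^*$, these sections vary holomorphically in $t$ (Griffiths). The pairings $h_j(t)=\int a\wedge\psi_t^*\sigma_j(t)\wedge\omega^{n-2}$ are therefore holomorphic functions on $\Delta$. Here $\omega$ may be taken to be the class of $\iota_{t_1}^*$ of a relative ample class, and it is flat since the classes are integral.

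\emph{Step 2: identity theorem.} Each $h_j$ vanishes on the uncountable set $E$. Since $\Delta$ is a countable union of compact sets, $E$ has an accumulation point in $\Delta$, so $h_j\equiv 0$ and $a_t$ is of type $(1,1)$ for all $t$. If we cannot shrink $\Delta$ globally, we argue on each coordinate disc of a countable cover, using connectedness to propagate the vanishing.

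\emph{Step 3: line bundle.} From $0\to\ZZ\to\mathcal{O}_{\cX}\to\mathcal{O}^*_{\cX}\to0$ on $\cX$, it suffices that the image of $\alpha$ in $H^2(\cX,\mathcal{O}_{\cX})$ vanishes. Because $\Delta$ is Stein, $H^2(\cX,\mathcal{O}_{\cX})=H^0(\Delta,R^2\pi_*\mathcal{O}_{\cX})$. The sheaf $R^2\pi_*\mathcal{O}_{\cX}$ is locally free with fibers $H^2(\cX_t,\mathcal{O}_{\cX_t})$, by constancy of $h^{0,2}$ and base change. The image of $\alpha$ is thus the section $t\mapsto a_t^{0,2}$, which vanishes by Step 2, so $\alpha=c_1(\cL)$.

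The main obstacle is Step 1: setting up the holomorphic dependence of the $(2,0)$-part carefully, including the Leray/Stein argument identifying $H^2(\cX,\mathcal{O})$ with sections of the direct image. I expect to handle this with the standard variation-of-Hodge-structure machinery rather than by hand.
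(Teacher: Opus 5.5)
Your overall strategy is the paper's: show that the locus where $a_t$ is of type $(1,1)$ is analytic, note that an uncountable $E\subset\Delta$ accumulates inside $\Delta$, conclude that $a_t\in H^{1,1}(\cX_t,\CC)$ for all $t$, and then get $\cL$ from the exponential sequence together with $H^2(\cX,\mathcal{O}_{\cX})\cong H^0(\Delta,R^2\pi_*\mathcal{O}_{\cX})$. Your Step 3 is correct, and it supplies what the paper only asserts, namely that $Z(s_\alpha)$ is analytic.

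The gap is in Step 1. It rests on a flat class $\omega$ that is K\"ahler on every fiber, which you take to be the class of a relatively ample line bundle. Nothing in the hypotheses provides one. The fibers are only assumed projective, and the paper itself (Step 1 of Proposition \ref{main-1}) only obtains a line bundle that is ample over a Zariski open subset, and uses this very lemma to do so.

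Shrinking $\Delta$ does not rescue this. A K\"ahler class on $\cX_{t_0}$ generally stops being of type $(1,1)$ on nearby fibers. Conversely, flat classes that stay $(1,1)$ can fail to be K\"ahler at special fibers. An example is a family of K3 surfaces, times a fixed curve so that $n\ge 3$, whose Picard lattice jumps at $t_0$ by a $(-2)$-class orthogonal to the generic polarization.

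Once $\omega$ is not K\"ahler on $\cX_t$, the class $\sigma\wedge\omega^{n-2}$ leaves $F^nH^{2n-2}(\cX_t)$ and Hodge--Riemann no longer applies. So the vanishing of your $h_j(t)$ no longer characterizes $a_t\in H^{1,1}$. A minor point: by type, $\int a_t\wedge\sigma_t\wedge\omega^{n-2}$ detects the $(0,2)$-part of $a_t$, not the $(2,0)$-part. This is harmless because $a_t$ is real.

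The repair is to drop $\omega$ entirely. One option is to pair $a$ with a local holomorphic frame of the Hodge bundle $F^nH^{2n-2}$. This bundle is the Poincar\'e-duality annihilator of $F^1H^2$, so no polarization is needed.

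The simpler option is to use your own Step 3 first. The class $\varphi(\alpha)\in H^0(\Delta,R^2\pi_*\mathcal{O}_{\cX})$ is a holomorphic section of a vector bundle on the connected disc. Its value at $t$ is the image of $a_t$ in $H^2(\cX_t,\mathcal{O}_{\cX_t})$, i.e.\ $a_t^{0,2}$. It vanishes on $E$, which has an accumulation point in $\Delta$, so it vanishes identically and $\alpha=c_1(\cL)$. This is exactly the paper's argument, and it makes your Steps 1--2 unnecessary.
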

\begin{proof}

 By the exponential sequence, one has the commutative diagram of exact sequences
\begin{equation*}\label{lesa}
\xymatrix@C=0.5cm{
  \cdots \ar[r]^{}
  & H^1(\mathcal{X}, \mathcal{O}^{*}_{\mathcal{X}}) \ar[r]^{}
  & H^2(\mathcal{X}, \mathbb{Z}) \ar[d]_{} \ar[r]^{\varphi}\ar[d]^{\cong}
  & H^2(\mathcal{X}, \mathcal{O}_{\mathcal{X}})\ar[d]^{\cong}\ar[r]^{} & \cdots \\
    &  & \Gamma(\Delta, R^2\pi_*\ZZ) \ar[r]^{\widetilde{\varphi}}
  &\Gamma(\Delta, R^2\pi_*\mathcal{O}_{X_{t}})&}.
\end{equation*}
Let $s_{\alpha}$ be the section in $\Gamma(\Delta,R^2\pi_*\ZZ)$ induced by $\alpha$, thus $s_{\alpha}(t)=a_t\in H^2(\cX_t,\ZZ)$. Define the zero locus
\beq
Z(s_{\alpha}):=\{t\in\Delta|a_t \mbox{ is the first Chern class of a line bundle on }\cX_t\}.\nonumber
 \eeq
 It is an analytic subset of $\Delta$. By Lefschetz theorem on $(1,1)$ classes, we have $E\subset Z(s_{\alpha})$. Since $E$ is uncountable, $Z(s_{\alpha})$ can't be a proper analytic subset of $\Delta$. Thus $Z(s_{\alpha})=\Delta$. Therefore $\widetilde{\varphi}(s_\alpha)=0$, which implies that $\varphi(\alpha)=0$ by the above commutative diagram. By the exponent sequence there exists a line bundle $\cL$ over $\cX$ such that $c_1(\cL)=\alpha$.
\end{proof}

\begin{lemm}\label{Key-lemm-2}Let $\pi\cln \cX\to \Delta$ be a smooth family of projective manifolds. Suppose that there exists an uncountable  subset $E\subset \Delta$ such that the class $a_t\in H^{1,1}(\cX_t,\CC)$ is an  integral class for all $t\in E$. Then there exists a point $t_0\in E$ and a line bundle $\cL$ over $\cX$ such that $\iota_{t_0}^*\left(c_1(\cL)\right)=a_{t_0}$.
\end{lemm}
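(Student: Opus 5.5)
The plan is to reduce Lemma \ref{Key-lemm-2} to Lemma \ref{Key-lemm-1} by a countability (pigeonhole) argument. We read the hypothesis as follows: for each $t\in E$ the class $a_t\in H^{1,1}(\cX_t,\CC)$ is integral, i.e.\ lies in the image of $H^2(\cX_t,\ZZ)\to H^2(\cX_t,\CC)$. We then transport all these classes to the fixed fiber $\cX_{t_1}$ via the diffeomorphisms $\psi_t$ constructed above. By \eqref{iden-1}, $\psi_t^*$ identifies $H^2(\cX_t,\ZZ)$ with $H^2(\cX_{t_1},\ZZ)$, so for each $t\in E$ we obtain an integral class
\[
b_t:=\psi_t^*a_t\in H^2(\cX_{t_1},\ZZ)
\]
(modulo torsion, which is harmless here).

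The key observation is that $H^2(\cX_{t_1},\ZZ)$ is a finitely generated abelian group, hence countable. The map $E\to H^2(\cX_{t_1},\ZZ)$, $t\mapsto b_t$, has an uncountable domain and a countable target. So there is a single class $b\in H^2(\cX_{t_1},\ZZ)$ whose preimage $E'\subset E$ is uncountable.

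Next we globalize $b$. The map $T:\cX\to\cX_{t_1}\times\Delta$ is a diffeomorphism with $T|_{\cX_{t_1}}=Id$, and $\Delta$ is contractible. Hence $T_{t_1}^*:H^2(\cX_{t_1},\ZZ)\to H^2(\cX,\ZZ)$ is an isomorphism, and $\iota_t^*\circ T_{t_1}^*=\kappa_t^*=(\psi_t^*)^{-1}$. Set $\alpha:=T_{t_1}^*b\in H^2(\cX,\ZZ)$. Then for every $t\in E'$,
\[
\iota_t^*\alpha=(\psi_t^*)^{-1}b=(\psi_t^*)^{-1}\psi_t^*a_t=a_t\in H^{1,1}(\cX_t,\CC).
\]
Thus $\alpha$ satisfies the hypothesis of Lemma \ref{Key-lemm-1} on the uncountable set $E'$. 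That lemma produces a line bundle $\cL$ on $\cX$ with $c_1(\cL)=\alpha$. Choosing any $t_0\in E'$ then gives $\iota_{t_0}^*c_1(\cL)=\iota_{t_0}^*\alpha=a_{t_0}$, as required.

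The main obstacle is bookkeeping rather than depth. One must check that the identifications $\psi_t^*$ are compatible with the global restriction maps $\iota_t^*$, i.e.\ that $\iota_t^*T_{t_1}^*=\kappa_t^*$. This follows from the commutative diagram, since $\kappa_t=T_{t_1}\circ\iota_t$. One must also handle torsion in $H^2(\cX_{t_1},\ZZ)$, which we do by working in its image in $H^2(\cX_{t_1},\CC)$. If the hypothesis is intended more loosely, with $a_t$ only a class in $H^2(\cX_t,\CC)$ that happens to be integral, the same pigeonhole argument applies to the countable lattice $H^2(\cX_{t_1},\ZZ)/\mathrm{torsion}$.
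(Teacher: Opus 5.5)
Your proposal is correct and is essentially the paper's proof. In both, you transport each $a_t$ to the fixed fiber by $\psi_t^*$, globalize with $T_{t_1}^*$, use that $H^2(\cX,\ZZ)$ is countable while $E$ is uncountable to get a single class whose restriction is $(1,1)$ at uncountably many points, and conclude with Lemma \ref{Key-lemm-1}. Your explicit pigeonhole on the fibers of $t\mapsto b_t$, the check $\iota_t^*T_{t_1}^*=\kappa_t^*=(\psi_t^*)^{-1}$, and the torsion remark spell out what the paper compresses into the assertion that $Z(s_{\alpha_{t_0}})$ contains uncountably many points.
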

\begin{proof}
For $t\in E$, let $\widetilde{a}_t=\psi_s^*a_{t}\in H^{2}(\cX_{t_1},\ZZ)$, and
\beq
\alpha_t:=T_{t_1}^*{\widetilde{a}_t}\in H^2(\mathcal{X}, \mathbb{Z}).\nonumber
\eeq
then the restriction $a_t=\iota_t^*\alpha_t\in H^{1,1}(\cX_t,\CC)$. Let $s_{\alpha_t}$ be the section in $\Gamma(\Delta,R^2\pi_*\ZZ)$ induced by $\alpha_t$, then $s_{\alpha_t}(t)=a_t\in H^{1,1}(\cX_t,\CC)$ for $t\in E$. Therefore
\beq
E\subset \cup_{\alpha_t \in H^2(\mathcal{X}, \mathbb{Z})}Z(s_{\alpha_t}).\nonumber
\eeq
Because there are only countable many elements in $H^2(\cX,\ZZ)$, and $E$ is an uncountable set, there exists a point $t_0\in E$ such that $Z(s_{\alpha_{t_0}})$ contains uncountable many points. By Lemma \ref{Key-lemm-1}, there exists a line bundle $\cL$ over $\cX$ with $\iota_{t_0}^*\left(c_1(\cL)\right)=a_{t_0}$.

\end{proof}

\begin{lemm}\label{equal-2}Let $U$ be a holomorphic convex domain in $\CC$, and $f,g:U\to N$ be two holomorphic maps from $U$ to a complex space $N$. Suppose that there exists an uncountable set $E\subset U$ such that $f(t)=g(t)$ for all $t\in E$. Then $f=g$ on the whole $U$.
\end{lemm}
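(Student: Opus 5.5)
Since $U\subset\CC$ is a domain (connected open set), the natural approach is an identity-principle argument. An uncountable subset of $U$ must accumulate somewhere inside $U$, and the coincidence locus of two holomorphic maps into a complex space is analytic, so it must be all of $U$. The main issue is that $N$ is an arbitrary (possibly singular, non-reduced) complex space. We therefore cannot simply subtract $f-g$, and must work locally in embeddings.

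First, find an accumulation point. Write $U=\bigcup_{n} K_n$ as a countable union of compact subsets. Since $E$ is uncountable, some $E\cap K_n$ is infinite. Hence $E$ has an accumulation point $t^*\in K_n\subset U$; in fact uncountably many points of $E$ are accumulation points of $E$ inside $U$.

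Next, show that the set
\[
Z:=\{t\in U \mid f(t)=g(t)\}
\]
is closed and that its interior is also closed. Closedness of $Z$ follows because $N$ is Hausdorff and $f,g$ are continuous.

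Now let $t_0\in Z$ be a point that is not isolated in $Z$ (for example $t^*$). Set $p=f(t_0)=g(t_0)$. Choose a neighborhood $V$ of $p$ in $N$ with a closed embedding $V\hookrightarrow W\subset\CC^m$, and a small disc $D\ni t_0$ with $f(D),g(D)\subset V$. Composing with the embedding, the maps $f|_D,g|_D$ become holomorphic maps $D\to\CC^m$. Their coordinate differences $h_j=f_j-g_j$ are holomorphic functions on the disc. Each $h_j$ vanishes on a set accumulating at $t_0$, so by the one-variable identity theorem $h_j\equiv 0$ on $D$.

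This shows that the set $Z'$ of non-isolated points of $Z$ is open. It is also closed, since a limit of non-isolated points of the closed set $Z$ is again a non-isolated point of $Z$. It is nonempty because it contains $t^*$. Since $U$ is connected, $Z'=U$, and so $f=g$ set-theoretically on $U$.

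The main obstacle is the meaning of "$f=g$" when $N$ is non-reduced: maps agreeing on points need not agree as morphisms of ringed spaces. In the embedded picture above, the equality $h_j\equiv 0$ is an equality of holomorphic maps into $\CC^m$. Because $V\to W$ is a closed embedding (a monomorphism), and $U$ is reduced, this gives equality of the morphisms $D\to V$, not merely of their underlying point maps. Gluing over the open cover of $U$ by such discs gives $f=g$ as holomorphic maps. In the paper's application $N=M$ is a coarse moduli space, and only equality of underlying maps (reduced structure) is needed. So if that subtlety is troublesome, I would state and prove the reduced version, which suffices. Holomorphic convexity of $U$ plays no role beyond connectedness.
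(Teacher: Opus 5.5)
Your proof is correct and takes essentially the same route as the paper: find an accumulation point of $E$ inside $U$, apply the one-variable identity theorem there, and propagate by connectedness. The only differences are organizational. The paper propagates along a path using an open--closed subset of $[0,1]$, while you use the open--closed set of non-isolated coincidence points and justify the local identity step through a local embedding of $N$ into $\CC^m$, which makes your version more careful than the paper's.
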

\begin{proof}Let $t_0\in U$ be an accumulation point of $E$, then there exists a small disk $\Delta_{t_0}$ around $t_0$ such that $f(t)=g(t)$ for $t\in\Delta_{t_0}$ since $f,g$ are holomorphic maps. Since $U$ is holomorphic convex domain, arbitrary two points can be connected by a smooth curve. Fix a point $q\in \Delta_{t_{0}}$, and let $p$ be an arbitrary point in $U\setminus \Delta_{t_{0}}$. Let $\gamma:[0,1]\rightarrow U$ be a smooth path connecting $\gamma(0)=q$ and $\gamma(1)=p$. Let $a_{0}=\text{min}\{u\in[0,1];\gamma(u)\in
\partial\Delta_{t_{0}}\}$, so that $\gamma(a_0)\in \partial\Delta_{t_{0}}$ and $\gamma([0,a_0))\subset\Delta_{t_0}$.
Define
$$A:=\{u\in [0,1]|f(\gamma(u))=g(\gamma(u))\}.$$
Since $\gamma([0,a_0))\subset\Delta_{t_0}$, we have $[0,a_0)\subseteq A$. Because $f,g$ are holomorphic maps, $f(\gamma(a_0))=g(\gamma(a_0))$, and there exists small neighborhood $(a_0-\epsilon,a_0+\epsilon)$ of $a_0$ such that $f(\gamma(s))=g(\gamma(s))$ for $s\in (a_0-\epsilon,a_0+\epsilon)$. Thus the set $A$ is open and closed. So $A=[0,1]$. Therefore $f=g$ on the whole domain $U$.
\end{proof}

  Let  $\widetilde{T}_{t_1}:=pr_1\circ \widetilde{T}: \cY\to \cY_{t_1}$ and $\widetilde{\kappa}_t: \cY_t\to \cY_{t_1}$ be the composition of $\widetilde{\iota}_t: \cY_t\to \cY$ with $\widetilde{T}_{t_1}$. The diffeomorphisms $\{\widetilde{\kappa}_t\}$
are smooth depend on $t$. Let $\widetilde{\psi}_t:\cY_{t_1}\to \cY_t$ be the inverse diffeomorphism of $\widetilde{\kappa}_t$. See the following commutative diagram.

\begin{eqnarray*}
\xymatrix{
\cY_t \ar[r]^{\widetilde{\iota}_t} \ar@<.5ex>[rd]^{\widetilde{\kappa}_t}  &  \cY\ar[d]^{\widetilde{T}_{t_1}}\ar[r]^(0.35){\widetilde{T}}\ar[rd]^(0.35){\pi'}|\hole  & \cY_{t_1}\times\Delta \ar[dl]^(0.6){\widetilde{p}_1}\ar[d]^{\widetilde{p}_2}\\
& \cY_{t_1}\ar@<.5ex>[lu]^{\widetilde{\psi}_t}  & \Delta
}
\end{eqnarray*}

\begin{prop}\label{main-1}
Let $\pi:\cX\to \Delta$ be a smooth family of projective non-unruled manifolds over unit disk $\Delta$. Suppose that $\pi':\cY\to \Delta$ is a smooth family of compact complex manifolds satisfies that $f_t:\cY_t\to \cX_t$ is isomorphic pointwise. Then there exists an open dense subset $U\subset \Delta$ such that for $t\in U$, the families are local isomorphic.
\end{prop}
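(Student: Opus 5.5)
The plan follows the outline in the introduction: build compatible polarizations on both families, compare the induced maps to the coarse moduli space $M$, and use local completeness (Proposition \ref{moduli-1}) to turn fibrewise isomorphisms into local isomorphisms of families.

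\emph{Transporting polarizations.} Fix $t_1\in\Delta$ and an ample line bundle on $\cX_{t_1}$, with class $a\in H^2(\cX_{t_1},\ZZ)$. Shrinking $\Delta$, Lemma \ref{Key-lemm-1} applied to $T_{t_1}^*a$ extends it to a line bundle $\cL$ on $\cX$, relatively ample near $t_1$; set $\alpha=c_1(\cL)$. For each $t$, the pointwise isomorphism gives the diffeomorphism
\[
\widetilde{\kappa}_t\circ f_t^{-1}\circ\psi_t\colon \cX_{t_1}\to\cY_{t_1},
\]
which lies in $\diff(\cX_{t_1},\cY_{t_1})$. This space has only countably many connected components, so there is an uncountable set $E\subset\Delta$ on which these maps lie in a single component. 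By Proposition \ref{equal-2}, they then induce the same map $\phi^*$ on $H^2$. For $t\in E$, the class
\[
b_t:=(f_t^{-1})^*\iota_t^*\alpha\in H^2(\cY_t,\ZZ)
\]
is of type $(1,1)$ and ample, and corresponds under $\widetilde{\psi}_t$ to a single class $b\in H^2(\cY_{t_1},\ZZ)$ independent of $t$. Lemma \ref{Key-lemm-1}, applied to $\widetilde{T}_{t_1}^*b$, gives a line bundle $\cL'$ on $\cY$ with $c_1(\cL')=\beta$ restricting to $b_t$ for $t\in E$. Ampleness is open, so on an open set $U_0$ containing an accumulation point of $E$ both $(\cX,\alpha)$ and $(\cY,\beta)$ are polarized families with non-uniruled fibres, i.e.\ objects of $\mathcal{M}(U_0)$.

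\emph{Equality of moduli maps.} The coarse moduli space $M$ gives holomorphic maps $\mu,\mu'\colon U_0\to M$. For $t\in E\cap U_0$, $f_t$ is an isomorphism of polarized manifolds $(\cY_t,b_t)\cong(\cX_t,\iota_t^*\alpha)$, so $\mu(t)=\mu'(t)$. By Lemma \ref{equal-2}, applied on a disk, $\mu=\mu'$ on $U_0$. Hence for every $t\in U_0$ the fibres are isomorphic as polarized manifolds.

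\emph{Local isomorphism.} Non-uniruledness forces $Aut_0$ of each fibre to be a complex torus, so by Proposition \ref{moduli-1} the fibre $(\cX_t,\alpha_t)$ has a Kuranishi family that is locally complete near $t$. Pulling both families back from it gives maps $\tau,\tau'$ from a neighborhood $V$ of $t$ to the Kuranishi base $K$. The Kuranishi base maps to $M$ with finite fibres, since it is the quotient by the finite group $\mathrm{Aut}(\cX_t,\alpha_t)$. Since $\mu=\mu'$, for each $s\in V$ the points $\tau(s)$ and $\tau'(s)$ lie in the same fibre. By an identity-theorem argument on a possibly smaller open set, this yields $\tau=\gamma\circ\tau'$ for some $\gamma$ in that finite group, and hence $\cX_V\cong\cY_V$ over $V$.

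\emph{Density.} Repeating the construction with $t_1$ ranging over a neighborhood of an arbitrary point of $\Delta$ produces such open sets arbitrarily close to any point, and their union is the open dense set $U$.

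\emph{Main obstacles.} The first is the uncountable-component argument: making the transported class $b$ independent of $t$, so that a single global $\beta$ exists. The second is the final step, lifting equality in the coarse space $M$ to equality of Kuranishi maps up to automorphisms. That step is where the open dense subset, rather than all of $\Delta$, is genuinely needed.
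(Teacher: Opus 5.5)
Your overall route is the paper's: compatible polarizations via the countability of components of $\diff$, equality of the two moduli maps by the identity theorem, then lifting through the Kuranishi family. However, your very first step does not hold as stated.

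\textbf{The gap.} Lemma \ref{Key-lemm-1} needs the restrictions $\iota_t^*(T_{t_1}^*a)=\kappa_t^*a$ to be of type $(1,1)$ on an uncountable set of fibres. For an arbitrary $t_1$ and an arbitrary ample class $a$ on $\cX_{t_1}$ this fails. Shrinking $\Delta$ cannot help, because $Z(s_{T_{t_1}^*a})$ is an analytic subset of $\Delta$ that may be just $\{t_1\}$.

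For example, take a one-parameter family of principally polarized abelian surfaces whose central fibre is $E\times E'$, with $E,E'$ non-isogenous, and whose very general fibre has Picard number one. The ample class $[E\times\{\mathrm{pt}\}]+2[\{\mathrm{pt}\}\times E']$ on the central fibre is not $c_1$ of a line bundle on $\cX$ over any neighbourhood of $0$.

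\textbf{The repair.} This is exactly what the paper's Step 1 (via Lemma \ref{Key-lemm-2}) handles. Choose an ample integral class $a_t$ on every fibre and transport each one into the countable group $H^2(\cX,\ZZ)$. Some transported class $\alpha$ then arises for uncountably many $t$, so $\iota_t^*\alpha=a_t$ is of type $(1,1)$ and ample there. Lemma \ref{Key-lemm-1} now applies to $\alpha$, and the base point is taken among those fibres ($t_1:=t_0$) rather than fixed in advance. Equivalently, you could pick $t_1$ outside the countable union of the proper loci $Z(s_\alpha)$, $\alpha\in H^2(\cX,\ZZ)$.

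Since this choice can be made inside any small disk, your density step then works. Your second use of Lemma \ref{Key-lemm-1}, on $\cY$, is legitimate as written, because on $E$ the transported class $b$ restricts to the $(1,1)$ classes $b_t$.

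\textbf{Smaller slips.}
\begin{itemize}
\item Since $f_t\colon\cY_t\to\cX_t$, the class on $\cY_t$ is $f_t^*\iota_t^*\alpha$, not $(f_t^{-1})^*\iota_t^*\alpha$.
\item The fact that diffeomorphisms in one component act identically on $H^2$ is Proposition \ref{equal-1}, not \ref{equal-2}.
\item The finite group acting on the Kuranishi base is $\mathrm{Aut}(\cX_t,\alpha_t)/\mathrm{Aut}_0(\cX_t)$, not $\mathrm{Aut}(\cX_t,\alpha_t)$; for instance, translations of an abelian variety preserve the polarization.
\item The final step needs no shrinking. A connected $V$ covered by the finitely many analytic sets $\{\tau=\gamma\circ\tau'\}$ coincides with one of them. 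So the passage to an open dense subset is forced by where the two polarizations are ample, not by the Kuranishi step.
\end{itemize}
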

\begin{proof}{\bf Step 1.} In this step we construct a line on $\cX$ which is relatively ample over an open dense subset $U'\subset \Delta$.

Because $\cX_t$ are projective manifolds for all $t\in\Delta$, choosing  an ample integral class $a_t\in H^{2}(\cX_t,\CC)$  for every $t\in\Delta$, Denote by $\bar{a}_t:=\psi_t^*a_t\in H^2(\cX_{t_1},\ZZ)$. Let $\alpha_t:=T_{t_1}^*\bar{a}_t$, then
\beq
\cup_{t\in \Delta}Z(s_{\alpha_t})=\Delta.\nonumber
\eeq
By Lemma \ref{Key-lemm-2}, there exists a $t_0\in\Delta$ such that
\beq
Z(s_{\alpha_{t_0}})=\Delta\nonumber
\eeq
and a line bundle $\cL$ on $\cX$ such that $\iota_{t_0}^*\left(c_1(\cL)\right)=\iota_{t_0}^*\left(\alpha_{t_0}\right)=a_{t_0}$. Without losing of generality we can chose $t_1$ to be $t_0$.  Because  $\iota_{t_1}^*(\alpha_{t_1})=a_{t_1}$ is an ample class, there exists an Zariski open subset $U'\subset \Delta$ such that $\cL_t:=\cL|_{\cX_t}$ is ample for every $t\in U'$, thus $b_t=c_1(\cL_t)=\iota_{t}^*(\alpha_{t_1})$ is an  ample class for every $t\in U'$.

{\bf Step 2.} Constructing a line bundle on $\cY$ which is relatively ample over a Zariski open dense subset.

 Consider the compositions of diffeomorphisms
\beq
\kappa_t\circ f_t\circ\widetilde{\psi}_t:\cY_{t_1}\to \cY_t\to\cX_t\to\cX_{t_1}\nonumber
\eeq
for all $t\in\Delta$, since $\diff(\cY_{t_1},\cX_{t_1})$ has only countable many components, there exists an uncountable subset $\widetilde{E}\subset \Delta$ such that $\kappa_t\circ f_t\circ\widetilde{\psi}_t$ belongs to the same component of $\diff(\cY_{t_1},\cX_{t_1})$ for $t\in \widetilde{E}$. Thus by Proposition \ref{equal-1}, we have
\beq\label{equal-4}
\widetilde{\psi}_t^*\circ f_t^*\circ\kappa_t^*=\widetilde{\psi}_{\widetilde{t}}^*\circ f_{\widetilde{t}}^*\circ\kappa_{\widetilde{t}}^*
\eeq
for $\widetilde{t},t\in \widetilde{E}$.

For $t\in \widetilde{E}$, let $c_t:=f_t^*b_t\in H^2(\cY_t,\ZZ)\cap H^{1,1}(\cY_t,\CC)$. Define $\widetilde{c}_t:=\widetilde{\psi}_t^*(c_t)$ and $\beta_t:=\widetilde{T}_{t_1}^*\widetilde{c}_t\in H^2(\cY,\ZZ)$. By Lemma \ref{Key-lemm-2}, there exists a point  $\widetilde{t}_1\in \widetilde{E}$, such that
\beq
Z\left(s_{\beta_{\widetilde{t}_1}}\right)=\Delta.\nonumber
\eeq
Thus there exists a line bundle $\widetilde{\cL}$ on $\cY$ such that the first Chern class $c_1(\widetilde{\cL})=\beta_{\widetilde{t}_1}$. Because $c_{\widetilde{t_0}}$ is an ample class, there exists an Zariski open subset $\widetilde{U}\subset \Delta$ such that $\widetilde{\iota}^*_t(\beta_{\widetilde{t}_1})$ is an ample class on $\cY_t$ for every $t\in \widetilde{U}$.

{\bf Step 3.} Constructing two smooth families of polarized K\"ahler manifolds which isomorphic on an uncountable subset.

Let $U:=U'\cap \widetilde{U}$, which is also a Zariski open dense subset of $\Delta$. Thus $E:=\widetilde{E}\cap U$ is an uncountable subset.
Define $\widetilde{\rho}_t:=\widetilde{\psi}_{\widetilde{t}_1}\circ\widetilde{\kappa}_{t}:\cY_t\to \cY_{\widetilde{t}_1}$, $\widetilde{\varphi}_t:=\widetilde{\psi}_{t}\circ\widetilde{\kappa}_{t_1}:\cY_{\widetilde{t}_1}\to \cY_t$,
$\rho_t:=\psi_{\widetilde{t}_1}\circ\kappa_{t}:\cX_t\to \cX_{\widetilde{t}_1}$, and $\varphi_t:=\psi_{t}\circ\kappa_{\widetilde{t}_1}:\cX_{\widetilde{t}_1}\to \cX_t$. Then
\beq
b_{\widetilde{t}_1}=\varphi_t^*(b_t).
\eeq
By the equality of (\ref{equal-4}), for $t\in E$,
\begin{eqnarray*}
\widetilde{\iota}_t^*(\beta_{\widetilde{t}_1})&=&\widetilde{\iota}_t^*\widetilde{T}_{t_1}^*\widetilde{c}_{\widetilde{t}_1}\\
&=&\widetilde{\iota}_t^*\widetilde{T}_{t_1}^*\widetilde{\psi}_{\widetilde{t}_1}^*(c_{\widetilde{t}_1})\\
&=&\widetilde{\iota}_t^*\widetilde{T}_{t_1}^*\widetilde{\psi}_{\widetilde{t}_1}^*(f_{\widetilde{t}_1}^*b_{\widetilde{t}_1})\\
&=&\widetilde{\iota}_t^*\widetilde{T}_{t_1}^*\widetilde{\psi}_{\widetilde{t}_1}^*\left(f_{\widetilde{t}_1}^*(\varphi_t^*(b_t))\right)\\
&=&\widetilde{\iota}_t^*\widetilde{T}_{t_1}^*\widetilde{\psi}_{\widetilde{t}_1}^*\left(f_{\widetilde{t}_1}^*(\kappa_{\widetilde{t}_1}^*\psi_t^*(b_t))\right)\\
&=&\widetilde{\iota}_t^*\widetilde{T}_{t_1}^*\widetilde{\psi}_t^*\circ f_t^*\circ\kappa_t^*\psi_t^*(b_t)\\
&=&f_t^*(b_t).
\end{eqnarray*}

Thus $(\cY_t,\widetilde{\iota}_t^*(\beta_{\widetilde{t}_1}))\cong (\cX_t, b_t)$ via $f_t$ for every $t\in E$. Recall that $M$ is the coarse moduli space of polarized K\"ahler manifolds. Families $\left(\pi_u^{\prime}:\cY_U\to U, \beta_{\widetilde{t}_1}\right)$ and $\left(\pi_u:\cX_U\to U, \alpha_{t_1}\right)$ induces two analytic map $g: U\to M$ and $h:U\to M$ which equal at all points $t\in E$. Therefore
\beq\label{equal-3}
g(t)=h(t),
\eeq
for all $t\in U$ by Lemma \ref{equal-2}.

{\bf Step 4.} Using the completeness of the moduli functor of polarized non-uniruled manifolds to prove the result.

For a point $t\in U$, let $(\pi_{K_t}:\cK\to K_t,\omega_{\cK})$ be its Kuranish family such that $(\cK_0,\omega_{\cK}|_{\cK_0})\cong \left(\cY_t, \beta_t\right)$.
By Proposition \ref{moduli-1}, there exists a small neighborhood $U_t\subset U$, and morphisms $\widetilde{g},\widetilde{h}: U_t\to K_t$ such that
\beq
\cY_{U_t}\cong \widetilde{g}^*\cK,\nonumber
\eeq
\beq
\cX_{U_t}\cong \widetilde{h}^*\cK.\nonumber
\eeq
By \cite[Lemma 8]{Fu84}, the quotient $ K_t/\Gamma\subset M$ is the small neighborhood of $(\cY_t,\widetilde{\iota}_t^*(\beta_{\widetilde{t}_1})$ in the coarse moduli space $M$ of polarized K\"ahler manifolds, where $\Gamma$ is a finite group acts on $K_t$. Let $\sigma_t: K_t\to K_t/\Gamma$ be the quotient map. Then $g|_{U_t}=\sigma_t\circ\widetilde{g}$ and $h|_{U_t}=\sigma_t\circ\widetilde{h}$. By (\ref{equal-3}), we have $g|_{U_t}=h|_{U_t}$. Because the order $\Gamma$ is finite and  the locally complete of the moduli functor by \cite[Proposition 8]{Fu84}, there exists a holomorphic isomorphism $\chi:K_t\to K_t$ with $\chi^*\cK\cong \cK$ and
\beq
\widetilde{g}= \chi\circ\widetilde{h}: U_t\to K_t.\nonumber
\eeq
Therefore there exists a biholomorphic $\Phi:\cY_{U_t}\to \cX_{U_t}$ such that
\beq
\cY_{U_t}\cong \widetilde{g}^*\cK\cong\widetilde{h}^*\left(\chi^*\cK\right)\cong \widetilde{h}^*\cK\cong \cX_{U_t}.\nonumber
\eeq
This isomorphism commutes with the projections to $U_t$, so the families are locally isomorphic at $t$.
\end{proof}

\begin{theo}
Let $\pi:\cX\to S$ be a smooth family of projective non-unruled manifolds over a Riemann surface $S$. Suppose that $\pi':\cY\to S$ is a smooth family of compact complex manifolds satisfies that $f_t:\cY_t\to \cX_t$ is isomorphic pointwise. Then there exists an open dense subset $U\subset S$ such that for $t\in U$, the families are locally isomorphic.
\end{theo}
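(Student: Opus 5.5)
The plan is to reduce the theorem to Proposition \ref{main-1} by a local-to-global argument on the Riemann surface $S$. Local isomorphism at a point is a local property. Moreover, if two sets are each open and dense in $S$, so is their union. It therefore suffices to cover $S$ by coordinate disks and produce, inside each disk, an open dense subset over which the families are locally isomorphic. These local pieces are then glued together.

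Concretely, I would first choose a countable (or arbitrary) cover of $S$ by open sets $D_i$, each biholomorphic to the unit disk $\Delta$. Such a cover exists since $S$ is a one-dimensional complex manifold. Over each $D_i$, the restricted families $\pi_{D_i}\cln\cX_{D_i}\to D_i$ and $\pi'_{D_i}\cln\cY_{D_i}\to D_i$ satisfy the hypotheses of Proposition \ref{main-1}:
\begin{itemize}
\item the fibers of $\pi$ are projective and non-uniruled;
\item $\pi'$ is a smooth family of compact complex manifolds;
\item the pointwise isomorphisms $f_t$ restrict to $t\in D_i$.
\end{itemize}
Applying Proposition \ref{main-1} after identifying $D_i\cong\Delta$ yields an open dense subset $U_i\subset D_i$ such that for each $t\in U_i$ there is a neighborhood $U_t\subset D_i$ and a biholomorphism $\cY_{U_t}\to\cX_{U_t}$ over $U_t$. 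Local isomorphism is intrinsic to the restricted families, so it does not depend on the chart.

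Next set $U:=\bigcup_i U_i$. This set is open as a union of open sets. For density, take any nonempty open $W\subset S$. Then $W$ meets some $D_i$, so $W\cap D_i$ is a nonempty open subset of $D_i$. Since $U_i$ is dense in $D_i$, the set $W\cap D_i$ meets $U_i$, and hence $W$ meets $U$. Finally, each $t\in U$ lies in some $U_i$, and the neighborhood $U_t\subset D_i\subset S$ witnesses that $\pi_U$ and $\pi'_U$ are locally isomorphic at $t$; we may shrink $U_t$ to lie in $U$, since $U_t\cap U$ is still an open neighborhood of $t$.

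The only point needing care, and the main (mild) obstacle, is making sure Proposition \ref{main-1} applies verbatim on each disk. This requires that the smooth trivialization $T$ and the Kuranishi/moduli arguments used there are purely local over $\Delta$, which they are. It also requires a compatible choice of the pointwise isomorphisms $f_t$ on each $D_i$; this is automatic, since we simply restrict the given $f_t$. No global gluing of the isomorphisms $\Phi$ is needed, because the conclusion only asks for local isomorphism at each point of $U$.
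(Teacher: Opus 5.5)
Your proposal is correct and follows the paper's proof: cover $S$ by coordinate disks, apply Proposition~\ref{main-1} to the restricted families over each disk, and take $U$ to be the union of the resulting open dense subsets. Your explicit check that this union is dense, and your remark that each neighborhood $U_t$ can be shrunk to lie in $U$, spell out details the paper leaves implicit.
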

\begin{proof}Because the Riemann surface $S$ can be covered by countable manifold small disks, we can assume that $S=\cup_k\Delta_k$. For each $k$, consider the restricted families $\cY_{\Delta_k}:=\pi^{\prime-1}(\Delta_k)\to \Delta_k$ and $\cX_{\Delta_k}:=\pi^{-1}(\Delta_k)\to \Delta_k$, there exists an open subset $U_k$ such that the restricted families are locally isomorphic at every point of $U_k$ by Proposition \ref{main-1}. Let $U=\cup_k U_k$, then $U$ is an open dense subset and the families are locally isomorphic at every point $s\in U$.
\end{proof}

\bibliographystyle{amsplain}

\begin{thebibliography}{9}

\bibitem{ABK} Antonelli, P. L., Burghelea, D., Kahn, P. J., \textit{The non-finite homotopy type of some diffeomorphism groups}, Topology 11 (1972), 1-49.






%

\bibitem{del} del Hoyo, M., \textit{Complete connections on fiber bundles}, Indag. Math. (N.S.) 27 (2016), no. 4, 985-990.










\bibitem{Fu84}Fujiki, A., \textit{Coarse Moduli Space for Polarized Compact K\"ahler Manifolds}, Publ. Res. Inst. Math. Sci. 20 (1984), no. 5, pp. 977-1005.

\bibitem{FS90}Fujiki, A., Schumacher, G. \textit{The Moduli Space of Extremal Compact K\"ahler Manifolds and Generalized Well-Petersson Metrics}, Publ. Res. Inst. Math. Sci. 26 (1990), no. 1, pp. 101-183.





%
%
%
%




\bibitem{Kir}Kirschner, T., \textit{On the local isomorphism property for families of K3 surfaces}, arXiv:1810.11395.


\bibitem{Lesl} Leslie, J. A. \textit{On a differential structure for the group of diffeomorphisms}, Topology 6 (1967), 263-271.


%
%


\bibitem{Math-1}\textit{https://math.stackexchange.com/questions/186145/a-fiber-bundle-over-euclidean-space-is-trivial}

\bibitem{Mee} Meersseman, L., \textit{Foliated structure of the Kuranishi space and isomorphisms of
deformation families of compact complex manifolds},  Ann. Sci. $\acute{E}$c. Norm. Sup$\acute{e}$r. (4) 44.3 (2011), pp. 495-525.

%


\bibitem{Schm}Schmeding, A., \textit{An introduction to infinite-dimensional differential geometry}, Cambridge Studies in Advanced Mathematics, 202. Cambridge University Press, Cambridge, 2023.


\bibitem{Sch84}Schumacher, G.,  \textit{Moduli of polarized K\"ahler manifolds}, Math. Ann. 269, 137-144 (1984).

\bibitem{Vo} Voisin, C.,   \textit{Hodge theory and complex algebraic geometry. I. Translated from the French by Leila Schneps}, Cambridge Studies in Advanced Mathematics, 76, Cambridge University Press, Cambridge, 2007.

\bibitem{We}Wehler. J. \textit{Isomorphie von Familien kompakter komplexer Mannigfaltigkeiten}, Math. Ann. 231 (1977), pp. 77-90.

\end{thebibliography}

\end{document}